\theoremstyle{plain}
\newtheorem{theorem}{Theorem}[section]
\newtheorem{remark}[theorem]{Remark}
\newtheorem{lemma}[theorem]{Lemma}
\newtheorem{proposition}[theorem]{Proposition}
\newtheorem{Example}{Example}[section]
\newtheorem{Definition}{Definition}[section]
\numberwithin{equation}{section}
\begin{document}

\title[Boltzmann equation with Debye-Yukawa potential]
{Cauchy problem to the homogeneous
Boltzmann equation with Debye-Yukawa potential for measure initial datum}

\author[Hao-Guang Li]
{Hao-Guang Li}

\address{Hao-Guang Li,
\newline\indent
School of Mathematics and statistics,
\newline\indent
South-central university for nationalities 430074,Wuhan, P. R. China}
\email{lihaoguang@mail.scuec.edu.cn}

\date{\today}

\subjclass[2010]{35Q20, 35E15, 35B65}

\keywords{Cauchy problem, Boltzmann equation,  Debye-Yukawa potential, measure initial datum}

\begin{abstract}
In this work, we prove the existence, uniqueness and smoothing properties
of the solution to the Cauchy problem for the spatially homogeneous
Boltzmann equation with Debye-Yukawa potential for probability measure initial datum.
\end{abstract}

\maketitle


\section{Introduction}
In this work, we consider the Cauchy problem for the spatially homogeneous Boltzmann equation,
\begin{equation}\label{eq-1}
\left\{ \begin{aligned}
         &\frac{\partial f}{\partial t} = Q(f,f),\,\\
         &f(0,v)=f_0(v).
\end{aligned} \right.
\end{equation}
where $f=f(t,v)$ is the density distribution function depending only on two varia\-bles $t\geq0$~and $v\in\mathbb{R}^{3}$. The Boltzmann bilinear collision operator is given by
\begin{equation*}
Q(g,f)(v)=\int_{\mathbb{R}^{3}}\int_{\mathbb{S}^{2}}B(v-v_{\ast},\sigma)\left(g(v_{\ast}^{\prime})f(v^{\prime})-g(v_{\ast})f(v)\right)dv_{\ast}d\sigma,
\end{equation*}
where for $\sigma\in \mathbb{S}^{2}$,~the symbols~$v_{\ast}^{\prime}$~and~$v^{\prime}$~are abbreviations for the expressions,
$$
v^{\prime}=\frac{v+v_{\ast}}{2}+\frac{|v-v_{\ast}|}{2}\sigma,\,\,\,\,\, v_{\ast}^{\prime}
=\frac{v+v_{\ast}}{2}-\frac{|v-v_{\ast}|}{2}\sigma,
$$
which are obtained in such a way that collision preserves momentum and kinetic energy,~namely
$$
v_{\ast}^{\prime}+v^{\prime}=v+v_{\ast},\,\,\,\,\, |v_{\ast}^{\prime}|^{2}+|v^{\prime}|^{2}=|v|^{2}+|v_{\ast}|^{2}.
$$
For monatomic gas, the collision cross section $B(v-v_{\ast},\sigma)$ is a non-negative function which~depends only on $|v-v_{\ast}|$ and $\cos\theta$
which is defined through the scalar product in $\mathbb{R}^{3}$ by
$$\cos\theta=\frac{v-v_{\ast}}{|v-v_{\ast}|}\cdot\sigma.$$
Without loss of generality, we may assume that $B(v-v_{\ast},\sigma)$ is supported on the set $\cos\theta\geq0,$ i.e. where $0\leq \theta\leq\frac{\pi}{2}$.
See for example \cite{Villani} for more explanations about the support of $\theta$.
For physical models, the collision cross section usually takes the form
\begin{equation*}
B(v-v_{\ast},\sigma)=\Phi(|v-v_{\ast}|)b(\cos\theta).
\end{equation*}
In this paper, we consider only the Maxwellian molecules case with $\Phi\equiv1$.
Except the hard sphere model, the function $b(\cos\theta)$ depends closely on the inter-molecule potentials.
For instance, in the important model case of the inverse-power potentials,
$$
U(\rho)=\frac{1}{\rho^{\gamma-1}}, \,\,\text{with}\,\, \gamma>2,
$$
where $\rho$ denotes the distance between two interacting particles, then
\begin{equation*}
b(\cos\theta)\sin\theta\approx K\theta^{-1-2\nu}, \,\,\text{as}\,\theta\rightarrow0^+, \,\text{where}\, 0<\nu=\frac{1}{\gamma-1}<1.
\end{equation*}
%
If the inter-molecule potential satisfies the Debye-Yukawa type potential,  where the potential function is given by
$$U(\rho)=\frac{1}{\rho\, e^{\rho^s}},\,\,\text{with}\,\,s>0,$$
the collision cross section has a singularity in the following form
\begin{equation}\label{b}
b(\cos\theta)\sim \theta^{-2}\left(\log\Big(\theta/2\Big)^{-1}\right)^{\frac{2}{s}-1},\,\,\text{when}\,\,\theta\rightarrow 0^+,  \,\,\text{with}\,\,s>0.
\end{equation}
This explicit formula was first appeared in the Appendix in \cite{YSCT}.
In some sense, the Debye-Yukawa type potentials is a model between the Coulomb potential corresponding to $s=0$ and the inverse-power potential.
For further details on the physics background and the derivation of the Boltzmann equation, we refer to the references \cite{Cerci}, \cite{Villani}.

In the study of the Cauchy problem of the homogeneous Boltzmann equation in Maxwellian molecules case,
Tanaka in \cite{Tanaka} proved the existence and the uniqueness of the solution under the assumption of the initial data  $f_0>0$,
$$\int_{\mathbb{R}^3}f_0(v)dv=1, \,\int_{\mathbb{R}^3}v_jf_0(v)dv=0,\,j=1,2,3,$$
and
\begin{equation}\label{finite-energy}
\int_{\mathbb{R}^3}|v|^2f_0(v)dv=3.
\end{equation}
The proof of this result was simplified and generalized in \cite{TV} and \cite{Villani1998}.

For the inverse-power potential, Cannone-Karch in \cite{Karch} extended this result for the initial data of the probability measure without \eqref{finite-energy}, this means that the initial data could have infinite energy. Recently, Morimoto \cite{Morimoto2} and Morimoto-Yang \cite{MY} extended this result more profoundly and prove the smoothing effect of the solution to the Cauchy problem \eqref{eq-1} without cutoff assumption in the strong singular case under the measure initial data.
However, the Cauchy problem to the homogeneous Boltzmann equation with Debye-Yukawa potential \eqref{b} has been only studied in \cite{YSCT}. It has been shown in \cite{YSCT} that weak solutions to the Cauchy problem \eqref{eq-1} with Debye-Yukawa
type interactions on $0<s<2$ enjoy $H^{\infty}$ smoothing property, i.e. starting with arbitrary initial datum $f_0\geq0$,
\begin{equation*}
\int_{\mathbb{R}^3}f_0(v)(1+|v|^2+\log(1+f_0(v)))dv<+\infty,
\end{equation*}
one has $f(t,\cdot)\in H^{\infty}(\mathbb{R}^3)$ for any positive time $t > 0$.
The logarithmic regularity theory was first introduced in \cite{Morimoto} on the hypoellipticity of the infinitely degenerate elliptic operator and was developed in \cite{Morimoto-Xu1},\cite{Morimoto-Xu2} on the logarithmic Sobolev estimates.

In the present work, setting the angular function $b$ satisfies the Debye-Yukawa potential \eqref{b} for $s>0$, based upon \cite{Morimoto2} and our recent results of \cite{GL-2015}, \cite{GL-2016} for the Cauchy problem to the linearized homogeneous Boltzmann equation with Debye-Yukawa potential,  we intend to prove the result in \cite{YSCT} for the probability measure initial datum.

Now we introduce the probability measure.
\begin{Definition}
A function $\psi:\mathbb{R}^3\rightarrow\mathbb{C}$ is called a characteristic function if there is a probability measure $\Psi$ (i.e., a positive Borel measure with $\int_{\mathbb{R}^3}d\Psi(v)=1$) such that
the identity $\psi=\int_{\mathbb{R}^3}e^{-iv\cdot\xi}d\Psi(v)$ holds. We denote the set of all characteristic functions by $\mathcal{K}$.
\end{Definition}

Inspired by \cite{TV}, we introduce a subspace $\mathcal{K}^{\alpha}$ for $\alpha\geq 0$ was defined in \cite{Karch} as follows:
\begin{equation*}
\mathcal{K}^{\alpha}=\{\varphi\in\mathcal{K}; \|\varphi-1\|_{\alpha}<+\infty\},
\end{equation*}
where
\begin{equation*}
\|\varphi-1\|_{\alpha}=\sup_{\xi\in\mathbb{R}^3}\frac{|\varphi-1|}{|\xi|^{\alpha}}.
\end{equation*}
The space $\mathcal{K}^{\alpha}$ endowed with the distance
\begin{equation*}
\|\varphi-\psi\|_{\alpha}=\sup_{\xi\in\mathbb{R}^3}\frac{|\varphi-\psi|}{|\xi|^{\alpha}}
\end{equation*}
is a complete metric space (see Proposition 3.10 in \cite{Karch}).

It follows that
$\mathcal{K}^{\alpha}={1}$ for $\alpha>2$ and the embeddings (Lemma 3.12 of \cite{Karch})
$${1}\subset\mathcal{K}^{\alpha}\subset\mathcal{K}^{\beta}\subset\mathcal{K}^0=\mathcal{K},\,\text{for all} \,2\geq\alpha\geq\beta\geq0.$$

In this paper, we consider the Cauchy problem \eqref{eq-1} for the initial datum of the probability measure $\Psi_0(v)$.
If we set $\psi_0(\xi)=\int_{\mathbb{R}^3}e^{-iv\cdot\xi}d\Psi_0(v)$ and denote the Fourier transform of the probability measure solution by $\psi(t,\xi)$, then it follows from the Bobylev formula in \cite{Boby} that the Cauchy problem \eqref{eq-1} is reduced to
\begin{equation}\label{eq-2}
\left\{ \begin{aligned}
         &\partial_t\psi(t,\xi) = \int_{\mathbb{S}^2}b\left(\frac{\xi\cdot\sigma}{|\xi|}\right)\left(\psi(t,\xi^+)\psi(t,\xi^-)-\psi(t,\xi)\psi(t,0)\right)d\sigma,\,\\
         &\psi(0,\xi)=\psi_0(\xi),
\end{aligned} \right.
\end{equation}
where $\xi^\pm=\frac{\xi}{2}\pm\frac{|\xi|}{2}\sigma.$

\begin{theorem}\label{trick}
The Maxwellian collision cross-section $b(\,\cdot\,)$ satisfies the assumption $\eqref{b}$ with $s>0$.
Then for any $\alpha>0$ and every $\psi_0\in\mathcal{K}^{\alpha}$,
there exists a unique classical solution $\psi\in C([0,+\infty),\mathcal{K}^{\alpha})$ of the Cauchy problem \eqref{eq-2}.
Furthermore, let $\psi(t,\xi),\, \varphi(t,\xi)\in C([0,+\infty),\mathcal{K}^{\alpha})$ be two solutions to
the Cauchy problem \eqref{eq-2} with the initial datum $\psi_0, \varphi_0\in\mathcal{K}^{\alpha}$, then for any $t>0$, we have
\begin{equation}\label{stable}
\|\psi(t)-\varphi(t)\|_{\alpha}\leq e^{\lambda_{\alpha}t}\|\psi_0-\varphi_0\|_{\alpha},
\end{equation}
where
\begin{equation}\label{lambda}
\lambda_{\alpha}=2\pi\int^{\frac{\pi}{2}}_0\beta(\theta)(\cos^\alpha\frac{\theta}{2}+\sin^\alpha\frac{\theta}{2}-1)d\theta
\end{equation}
\end{theorem}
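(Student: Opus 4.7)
The plan is to reduce \eqref{eq-2} to a mild integral equation, prove existence via approximation of the singular kernel and a fixed-point argument, and obtain the stability estimate \eqref{stable} by a Gr\"onwall-type inequality for $\|\cdot\|_\alpha$; uniqueness is then the special case $\psi_0=\varphi_0$ of stability. Using $\psi(t,0)=1$, \eqref{eq-2} is equivalent to
\begin{equation*}
\psi(t,\xi)=\psi_0(\xi)+\int_0^t\int_{\mathbb{S}^2} b\!\left(\tfrac{\xi\cdot\sigma}{|\xi|}\right)\bigl[\psi(s,\xi^+)\psi(s,\xi^-)-\psi(s,\xi)\bigr]d\sigma\,ds.
\end{equation*}

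To see this makes sense on $\mathcal{K}^\alpha$, I would decompose
\begin{equation*}
\psi(\xi^+)\psi(\xi^-)-\psi(\xi)=[\psi(\xi^+)-1][\psi(\xi^-)-1]+[\psi(\xi^+)-\psi(\xi)]+[\psi(\xi^-)-1],
\end{equation*}
whose three terms all vanish as $\theta\to 0$. Combining $|\psi(\eta)-1|\le\|\psi-1\|_\alpha|\eta|^\alpha$, $|\xi^+|=|\xi|\cos(\theta/2)$, $|\xi^-|=|\xi|\sin(\theta/2)$, and the Cauchy--Schwarz bound $|\psi(\xi^+)-\psi(\xi)|^2\le 2\,\mathrm{Re}(1-\psi(\xi^-))\le 2\|\psi-1\|_\alpha|\xi^-|^\alpha$ (valid for characteristic functions via $\psi(\xi^+)-\psi(\xi)=\int e^{-iv\cdot\xi^+}(1-e^{-iv\cdot\xi^-})d\Psi(v)$), each piece is $O(\theta^{\alpha/2})$ or better and hence integrable against the Debye--Yukawa singularity $\beta(\theta)\sim\theta^{-1}(\log(2/\theta))^{2/s-1}$ for every $\alpha>0$.

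For existence I would truncate to $b_n=b\,\mathbf{1}_{\theta\ge 1/n}$, solve the bounded-kernel Cauchy problem by Banach's fixed-point theorem in $C([0,T],\mathcal{K}^\alpha)$ --- iterates staying in $\mathcal{K}$ thanks to a Wild-type convex-combination representation --- extend globally, and pass to $n\to\infty$ using the uniform-in-$n$ stability estimate below applied to $\psi_n-\psi_m$.

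For stability, set $\chi=\psi-\varphi$; subtracting the two equations and using $\chi(t,0)=0$ gives
\begin{equation*}
\partial_t\chi(t,\xi)=\int_{\mathbb{S}^2} b\bigl[\chi(\xi^+)\psi(\xi^-)+\varphi(\xi^+)\chi(\xi^-)-\chi(\xi)\bigr]d\sigma,
\end{equation*}
and multiplying by $\bar\chi$, taking real parts, and using $|\psi|,|\varphi|\le 1$ yields the scalar inequality
\begin{equation*}
\partial_t|\chi(t,\xi)|\le\int_{\mathbb{S}^2} b\bigl[|\chi(\xi^+)|+|\chi(\xi^-)|-|\chi(\xi)|\bigr]d\sigma.
\end{equation*}
Dividing by $|\xi|^\alpha$, using $|\chi(\xi^\pm)|\le\|\chi(t)\|_\alpha|\xi^\pm|^\alpha$, and evaluating along a sequence $(\xi_k^\ast)$ with $|\chi(t,\xi_k^\ast)|/|\xi_k^\ast|^\alpha\to\|\chi(t)\|_\alpha$ produces $D^+\|\chi(t)\|_\alpha\le\lambda_\alpha\|\chi(t)\|_\alpha$, after which Gr\"onwall delivers \eqref{stable}. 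The main obstacle is exactly this last passage: the supremum defining $\|\cdot\|_\alpha$ need not be attained on $\mathbb{R}^3$, so rigorous bookkeeping via upper Dini derivatives with approximating sequences is needed so that the cancellation between $|\chi(\xi_k^\ast)|/|\xi_k^\ast|^\alpha$ and $\|\chi(t)\|_\alpha$ supplies the $-1$ in \eqref{lambda}; checking $\lambda_\alpha<\infty$ is then straightforward, since $\cos^\alpha(\theta/2)+\sin^\alpha(\theta/2)-1=O(\theta^\alpha)$ near $\theta=0$ for $\alpha\in(0,2]$ absorbs the Debye--Yukawa singularity.
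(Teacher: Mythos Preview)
Your existence outline via cutoff approximation matches the paper's, and your decomposition for absolute convergence of the collision integral is a legitimate alternative to the paper's symmetrization trick in its Lemma~\ref{ke}. The genuine gap is in the stability step, and you have misdiagnosed the obstacle.

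You derive the pointwise inequality
\[
\partial_t\frac{|\chi(t,\xi)|}{|\xi|^{\alpha}}\le\int_{\mathbb{S}^2}b\Bigl(\frac{\xi}{|\xi|}\cdot\sigma\Bigr)\Bigl[\|\chi(t)\|_{\alpha}\bigl(\cos^{\alpha}\tfrac{\theta}{2}+\sin^{\alpha}\tfrac{\theta}{2}\bigr)-\frac{|\chi(t,\xi)|}{|\xi|^{\alpha}}\Bigr]d\sigma,
\]
and plan to evaluate along a maximizing sequence $\xi_k^\ast$ with $|\chi(t,\xi_k^\ast)|/|\xi_k^\ast|^{\alpha}=\|\chi(t)\|_{\alpha}-\delta_k$, $\delta_k\downarrow 0$. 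But the integrand on the right then equals $\|\chi(t)\|_{\alpha}\bigl(\cos^{\alpha}\tfrac{\theta}{2}+\sin^{\alpha}\tfrac{\theta}{2}-1\bigr)+\delta_k$, and since the Debye--Yukawa kernel is \emph{not} integrable on $\mathbb{S}^2$, the second piece contributes $\delta_k\cdot(+\infty)$. No amount of Dini-derivative bookkeeping salvages this: whenever $\delta_k>0$ the right-hand side is $+\infty$, so the bound is vacuous, and the passage to $D^+\|\chi(t)\|_{\alpha}\le\lambda_{\alpha}\|\chi(t)\|_{\alpha}$ fails. The difficulty is not that the supremum may fail to be attained; it is that the ``$-1$'' you need lives inside a divergent integral and cannot be separated from it by an approximation in $\xi$ alone.

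The paper (following Morimoto) circumvents this with an angular $\epsilon$-splitting. One writes $\mathbb{S}^2=\Omega_\epsilon\cup\Omega_\epsilon^c$ where $\Omega_\epsilon$ is a small cap around $\xi/|\xi|$. On $\Omega_\epsilon^c$ the kernel \emph{is} integrable with mass $a_\epsilon$, and your type of estimate gives $|\partial_t h+a_\epsilon h|\le\lambda_{\epsilon,\alpha}H_R(t)+r_\epsilon$, where $H_R(t)=\sup_{|\xi|\le R}|h(t,\xi)|$ (so the supremum is attained). The contribution of $\Omega_\epsilon$ is controlled not through $\chi$ but by bounding $\int_{\Omega_\epsilon}b(\psi^+\psi^--\psi)/|\xi|^{\alpha}\,d\sigma$ and the analogous term for $\varphi$ \emph{separately}, using Lemma~\ref{ke}; this remainder $r_\epsilon\to 0$ as $\epsilon\to 0$. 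One then performs Gr\"onwall at fixed $\epsilon$ (where everything is finite), observes $\lambda_{\epsilon,\alpha}-a_\epsilon\to\lambda_{\alpha}$, and only afterwards lets $\epsilon\to 0$ and $R\to\infty$. That two-parameter limiting procedure is precisely the missing idea in your sketch.
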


\begin{remark}
Comparing with the restriction stated in \cite{Karch} and \cite{Morimoto2} that: for some $\alpha_0>0$, $\alpha\in [\alpha_0,2]$ satisfies
$$\sin^{\alpha_0}\frac{\theta}{2} b(\cos\theta)\sin\theta\in L^1([0,\frac{\pi}{2}]),$$
we study the Cauchy problem \eqref{eq-2} without this condition and instead, we set the assumption for any $\alpha>0$.  Because for the collision kernel given in \eqref{b}, $b(\,\cdot\,)$ satisfies
\begin{align}\label{kernel}
&\int^{\frac{\pi}{2}}_0\sin^{\alpha}\frac{\theta}{2} b(\cos\theta)\sin\theta d\theta\nonumber\\
\lesssim&\int^{1}_0u^{\alpha-1}(\log u^{-1})^{\frac{2}{s}-1}du=\int^{+\infty}_{1}x^{-\alpha-1}(\log x)^{\frac{2}{s}-1}dx\nonumber\\
=&\int^{+\infty}_0e^{-\alpha u}u^{\frac{2}{s}-1}du=\alpha^{-\frac{2}{s}}\Gamma(\frac{2}{s})
\end{align}
where $\Gamma(\frac{2}{s})=\int^{+\infty}_0x^{\frac{2}{s}-1}e^{-x}dx$ is the standard Gamma function.
\end{remark}
The regularity of the  Boltzmann equation has been studied in many works. Under the assumption of the singularity of the collision kernel $b(\,\cdot\,)$, we have the smoothing effect of solutions to the Cauchy problem for the spatially homogenous Boltzmann equation for the initial data $f_0>0$ satisfies
$$\int_{\mathbb{R}^3}f_0(v)(1+|v|^2)dv<+\infty,\,\,\int_{\mathbb{R}^3}f_0(1+\log f_0)dv<+\infty,$$
see \cite{Villani}, \cite{ADVW}, \cite{YSCT} and the references therein.
However, we can not always expect the smoothing effect for solutions to the Cauchy problem for the spatially homogeneous Boltzmann equation in the probability measures whose Fourier transforms are in $\mathcal{K}^{\alpha}$.  Since $1\in\mathcal{K}^{\alpha}$, is the Fourier transform of the Dirac mass on 0.
Besides, we present an example of the smoothing property for the Cauchy problem of the spatially homogeneous Boltzmann equation with measure initial datum:
\begin{Example}\label{example}
Put $\mathbf{e}_1=(1,0,0),\,\mathbf{e}_2=(0,1,0),\,\mathbf{e}_3=(0,0,1)$, then the set of vectors $\{\mathbf{e}_k\}_{k=1,2,3}$ forms an orthonormal basis of  $\mathbb{R}^3$.
Let
$$f_0=\frac{1}{2}\frac{1}{\sqrt{2\pi}}e^{-\frac{|v|^2}{2}}+\frac{1}{12}\sum^3_{k=1}(\delta(v-\mathbf{e}_k)+\delta(v+\mathbf{e}_k)),$$
if $b(\,\cdot\,)$ satisfies \eqref{b} with $0<s<2$ and $f(t,v)$ is the unique solution with initial data $f_0$, then $f(t,v)\in H^{\infty}(\mathbb{R}^3)$ for  $t\in (0,T]$ with some $T>0$.
\end{Example}

To interpret this example, we need to prove the following result of  $H^{\infty}$ smoothing effect given in \cite{YSCT} (see also \cite{AE}).
\begin{proposition}\label{trick1}
Assume that $b(\,\cdot\,)$ is given in~$\eqref{b}$ with $0<s<2$.
$\psi\in C([0,+\infty),\mathcal{K}^{\alpha})$ for any $\alpha>0$ is a unique solution of the Cauchy problem \eqref{eq-2}, if for any $T>0$, there exists a constant $D_T>0$ such that the solution $\psi(t,\xi)$ satisfies
\begin{equation}\label{regular-kernel}
\inf_{t\in[0,T]}\left(1-|\psi(t,\xi)|\right)\geq D_T\min(1,|\xi|^2),
\end{equation}
then the inverse Fourier transform of $\psi(t,\xi)$ belongs to $H^{\infty}(\mathbb{R}^3)$ for any $t\in(0,T].$
\end{proposition}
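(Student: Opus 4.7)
The goal is to show that $f(t,\cdot)=\mathcal{F}^{-1}\psi(t,\cdot)\in H^{N}(\mathbb{R}^{3})$ for every integer $N\geq 0$ and every $t\in(0,T]$. By Plancherel's theorem this reduces to $\int_{\mathbb{R}^{3}}\langle\xi\rangle^{2N}|\psi(t,\xi)|^{2}d\xi<+\infty$ for every $N$, and since $|\psi|\leq 1$ uniformly, it suffices to establish the pointwise super-polynomial decay estimate
\[|\psi(t,\xi)|^{2}\leq C_{T}\exp\!\left(-c_{T}\,t\,(\log\langle\xi\rangle)^{2/s}\right),\qquad t\in(0,T],\; \xi\in\mathbb{R}^{3}.\]
Under the hypothesis $0<s<2$ one has $2/s>1$, so this rate beats every polynomial in $|\xi|$ and yields the $H^{\infty}$ conclusion.

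The plan is to extract a pointwise differential inequality for $|\psi(t,\xi)|^{2}$ from the dissipation structure of \eqref{eq-2}. Multiplying the equation by $\overline{\psi(t,\xi)}$, taking real parts, and using $\psi(t,0)=1$ together with the identity $2\,\mathrm{Re}(\bar z w)=|z|^{2}+|w|^{2}-|z-w|^{2}$ applied to $z=\psi(\xi)$, $w=\psi(\xi^{+})\psi(\xi^{-})$, one obtains the pointwise energy identity
\[\partial_{t}|\psi(\xi)|^{2}+\int_{\mathbb{S}^{2}}b(\cos\theta)\bigl|\psi(\xi)-\psi(\xi^{+})\psi(\xi^{-})\bigr|^{2}d\sigma=\int_{\mathbb{S}^{2}}b(\cos\theta)\bigl(|\psi(\xi^{+})|^{2}|\psi(\xi^{-})|^{2}-|\psi(\xi)|^{2}\bigr)d\sigma.\]
I then use the splitting $\psi-\psi^{+}\psi^{-}=\psi(1-\psi^{-})+\psi^{-}(\psi-\psi^{+})$, combine it with the inequality $|a+b|^{2}\geq\tfrac{1}{2}|a|^{2}-|b|^{2}$ and $|\psi^{-}|\leq 1$, and invoke the hypothesis $|1-\psi^{-}|\geq 1-|\psi^{-}|\geq D_{T}\min(1,|\xi|^{2}\sin^{2}(\theta/2))$. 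A calculation entirely analogous to \eqref{kernel} (splitting the angular integral at $\sin(\theta/2)\sim 1/|\xi|$ and applying the substitution $u=-\log\sin(\theta/2)$ that produces the Gamma function) then delivers $\int b|1-\psi^{-}|^{2}d\sigma\geq c_{T}(\log\langle\xi\rangle)^{2/s}$ for $|\xi|$ large. The right-hand gain term is handled similarly via $|\psi^{+}\psi^{-}|^{2}-|\psi|^{2}=(|\psi^{+}|^{2}-|\psi|^{2})+|\psi^{+}|^{2}(|\psi^{-}|^{2}-1)$, whose second summand is non-positive and whose first summand is bounded by $2|\psi^{+}-\psi|$.

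The delicate remaining task is to control $\int b|\psi-\psi^{+}|^{2}d\sigma$. I exploit that every $\varphi\in\mathcal{K}^{\alpha}$ is uniformly $(\alpha/2)$-H\"older continuous: Cauchy–Schwarz applied to $\varphi(\xi_{1})-\varphi(\xi_{2})=\int e^{-i\xi_{2}\cdot v}(e^{-i(\xi_{1}-\xi_{2})\cdot v}-1)d\Phi(v)$ yields $|\varphi(\xi_{1})-\varphi(\xi_{2})|\leq\sqrt{2\|\varphi-1\|_{\alpha}}\,|\xi_{1}-\xi_{2}|^{\alpha/2}$. Since $|\xi-\xi^{+}|=|\xi|\sin(\theta/2)$, a bare application of \eqref{kernel} gives only an $O(|\xi|^{\alpha})$ bound that polynomially overwhelms the logarithmic coercivity. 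To absorb it I combine the H\"older bound with the a priori control $|\psi-\psi^{+}|^{2}\leq 2(|\psi|^{2}+|\psi^{+}|^{2})\leq 4\epsilon_{t}(\xi)$, where $\epsilon_{t}(\xi):=\sup_{|\eta|\geq|\xi|/\sqrt 2}|\psi(t,\eta)|^{2}$ (noting $|\xi^{+}|\geq|\xi|/\sqrt 2$ for $\theta\in[0,\pi/2]$), and optimize the angular cutoff at $\sin(\theta_{\ast}/2)\sim\epsilon_{t}(\xi)^{1/\alpha}/|\xi|$; redoing the logarithmic integral of \eqref{kernel} with this split bound produces $\int b|\psi-\psi^{+}|^{2}d\sigma\lesssim\epsilon_{t}(\xi)(\log\langle\xi\rangle)^{2/s}$. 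Putting everything together yields
\[\partial_{t}|\psi(\xi)|^{2}+\tfrac{c_{T}}{2}(\log\langle\xi\rangle)^{2/s}|\psi(\xi)|^{2}\leq C\,\epsilon_{t}(\xi)(\log\langle\xi\rangle)^{2/s},\]
and a Gr\"onwall argument run on the monotone function $R\mapsto\sup_{|\xi|\geq R}|\psi(t,\xi)|^{2}$ produces the advertised super-polynomial decay. The main anticipated obstacle is precisely closing this bootstrap: one must track the multiplicative constants in both the coercivity and H\"older steps carefully enough that the absorption constant $C$ is strictly smaller than $c_{T}/2$, and match the logarithmic asymptotics at $\theta\to 0^{+}$ coming from both \eqref{kernel} and the H\"older exponent $\alpha/2$—this is the technical heart of the argument.
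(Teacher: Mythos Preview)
Your approach is genuinely different from the paper's: the paper works with a weighted $L^{2}$ norm in $\xi$ using the time-dependent weight $M_{\delta}(t,\xi)=\langle\xi\rangle^{Nt-4}\langle\delta\xi\rangle^{-2N_{0}}$, whereas you aim at a pointwise super-polynomial decay estimate for $|\psi(t,\xi)|^{2}$ via a bootstrap in frequency shells. The pointwise route is more ambitious, but as written it has a real gap, located exactly in the gain term that you pass over quickly.

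After your identity, the right-hand side contains $\int_{\mathbb{S}^{2}}b\,(|\psi^{+}|^{2}-|\psi|^{2})\,d\sigma$. You bound the integrand by $2|\psi^{+}-\psi|$, but this is a \emph{first-power} difference, not a square. Running your own splitting at $\theta_{*}\sim\epsilon_{t}^{1/\alpha}/|\xi|$ gives at best
\[
\int_{\mathbb{S}^{2}}b\,|\psi^{+}-\psi|\,d\sigma\ \lesssim\ \sqrt{\epsilon_{t}(\xi)}\,(\log\langle\xi\rangle)^{2/s},
\]
not $\epsilon_{t}(\xi)(\log\langle\xi\rangle)^{2/s}$. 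Since $\sqrt{\epsilon_{t}}\gg\epsilon_{t}\geq|\psi(\xi)|^{2}$ whenever $\epsilon_{t}$ is small, this term cannot be absorbed by the coercivity $\tfrac{c_{T}}{2}(\log\langle\xi\rangle)^{2/s}|\psi|^{2}$, regardless of how carefully you track constants; the bootstrap collapses at the first iteration. The condition ``$C<c_{T}/2$'' you propose would not save this even if it held, because $\epsilon_{t}(\xi)$ is a supremum over the larger shell $|\eta|\geq|\xi|/\sqrt{2}$ and is never dominated by $|\psi(\xi)|^{2}$.

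The paper avoids this obstruction altogether by integrating in $\xi$ and invoking the \emph{cancellation lemma}: after the change of variables $\xi\mapsto\xi^{+}$ one has
\[
\int_{\mathbb{R}^{3}\times\mathbb{S}^{2}}b\,\bigl(|M\psi|^{2}-|M^{+}\psi^{+}|^{2}\bigr)\,d\sigma\,d\xi
=\int_{\mathbb{R}^{3}}|M\psi|^{2}\!\int_{\mathbb{S}^{2}}b\,\bigl(1-(\cos\tfrac{\theta}{2})^{-3}\bigr)d\sigma\,d\xi
\ \lesssim\ \|M\psi\|_{L^{2}}^{2},
\]
so the gain term contributes only a bounded multiple of the weighted $L^{2}$ norm, with no loss. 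This mechanism is intrinsically an $L^{2}$-in-$\xi$ device and has no pointwise analogue; it is precisely why the paper's argument closes with an ordinary Gr\"onwall inequality and yours does not. If you want to salvage a pointwise argument you would need an entirely different treatment of $\int b(|\psi^{+}|^{2}-|\psi|^{2})\,d\sigma$---for instance a symmetrization in $\sigma$ exploiting that $|\psi^{+}|$ depends only on $\cos(\theta/2)$---but that is not what you have sketched.
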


\begin{remark}
The inequality is a key for the coercive estimate for the smoothing effect of the Cauchy problem for the non-cutoff homogeneous Boltzmann equation, see (2.2) of \cite{YSCT}, we also refer the readers to \cite{HMUY} and the references therein. In fact, for the initial data $f(t)>0$ satisfies
$$\int_{\mathbb{R}^3}f(t)(1+|v|^2)dv<+\infty,\,\,\int_{\mathbb{R}^3}f(t)(1+\log f(t))dv<+\infty,$$
then the Fourier transform of $f$ satisfies \eqref{regular-kernel} (see Lemma 3 of \cite{ADVW}).
\end{remark}

The rest of the paper is arranged as follows. The proof of Theorem \ref{trick} will be presented in Section \ref{S2}.
In Section \ref{S3}, we will prove Proposition \ref{trick1} and  show the $H^{\infty}$ smoothing effect for the Example \ref{example}.

\section{The proof of Theorem \ref{trick} }\label{S2}
The construction of the solution to the Cauchy problem for the homogeneous Boltzmann equation with cutoff assumption has been done in Section 4 of \cite{Karch}.  Our idea is:  Constructing a sequence solutions under the cutoff assumption, limiting the sequence solutions in a suitable space, then proving the limit solution is the solution under the non-cutoff assumption.  So the difficult part of the proof is to show the uniqueness part of the theorem \ref{trick}.

The following Lemmas are used for the proof of the uniqueness part of the theorem \ref{trick}.
\begin{lemma}
For any characteristic function $\varphi\in\mathcal{K}$, we have
\begin{equation}\label{char}
|\varphi(\xi)\varphi(\eta)-\varphi(\xi+\eta)|^2\leq(1-|\varphi(\xi)|^2)(1-|\varphi(\eta)|^2).
\end{equation}
for all $\xi,\eta\in \mathbb{R}^3$ and moreover if $\varphi\in\mathcal{K}^{\alpha})$, then
\begin{equation}\label{charalpha}
|\varphi(\xi)-\varphi(\xi+\eta)|\leq\|\varphi-1\|_{\alpha}(4|\xi|^{\frac{\alpha}{2}}|\eta|^{\frac{\alpha}{2}}+|\eta|^{\alpha}).
\end{equation}
\end{lemma}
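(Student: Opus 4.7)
The plan is to prove \eqref{char} by representing $\varphi$ as the Fourier transform of the probability measure $\Psi$, then applying a symmetrization trick followed by Cauchy--Schwarz. Specifically, since $\int d\Psi=1$, I can write each of $\varphi(\xi)\varphi(\eta)$ and $\varphi(\xi+\eta)$ as a double integral against $d\Psi(v)\,d\Psi(w)$. Interchanging $v\leftrightarrow w$ in half of the expression and averaging, I expect to obtain the symmetric identity
\begin{equation*}
\varphi(\xi)\varphi(\eta)-\varphi(\xi+\eta)=-\frac{1}{2}\int\!\!\int\bigl(e^{-iv\cdot\xi}-e^{-iw\cdot\xi}\bigr)\bigl(e^{-iv\cdot\eta}-e^{-iw\cdot\eta}\bigr)\,d\Psi(v)\,d\Psi(w).
\end{equation*}
Cauchy--Schwarz on this double integral splits the modulus into the product of two $L^2$ factors, each of the form $\int\!\!\int|e^{-iv\cdot\zeta}-e^{-iw\cdot\zeta}|^2\,d\Psi(v)\,d\Psi(w)$. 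Expanding the square and using the same probability-measure trick yields precisely $2(1-|\varphi(\zeta)|^2)$ for $\zeta\in\{\xi,\eta\}$, and the factors $2$ combine to cancel the $\tfrac12$, giving \eqref{char}.

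For \eqref{charalpha}, the idea is to reduce it to \eqref{char} by inserting the factor $\varphi(\xi)\varphi(\eta)$ and applying the triangle inequality:
\begin{equation*}
|\varphi(\xi)-\varphi(\xi+\eta)|\le|\varphi(\xi+\eta)-\varphi(\xi)\varphi(\eta)|+|\varphi(\xi)|\cdot|1-\varphi(\eta)|.
\end{equation*}
The second term is bounded by $\|\varphi-1\|_{\alpha}|\eta|^{\alpha}$ since $|\varphi(\xi)|\le 1$. For the first term, \eqref{char} gives $\sqrt{(1-|\varphi(\xi)|^2)(1-|\varphi(\eta)|^2)}$; I then use $1-|\varphi(\zeta)|^2=(1-|\varphi(\zeta)|)(1+|\varphi(\zeta)|)\le 2(1-|\varphi(\zeta)|)\le 2|1-\varphi(\zeta)|\le 2\|\varphi-1\|_{\alpha}|\zeta|^{\alpha}$, which produces a contribution of the form $2\|\varphi-1\|_{\alpha}|\xi|^{\alpha/2}|\eta|^{\alpha/2}$. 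Summing yields \eqref{charalpha}, indeed with a cleaner constant $2$ instead of the stated $4$; the weaker $4$ in the statement is harmless.

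The main obstacle is the first inequality: a naive Cauchy--Schwarz applied to either of the asymmetric representations $\int\!\!\int e^{-iv\cdot\xi}(e^{-iw\cdot\eta}-e^{-iv\cdot\eta})\,d\Psi\,d\Psi$ only produces a bound of the form $2(1-|\varphi(\eta)|^2)$, missing the factor $1-|\varphi(\xi)|^2$. Recognizing that the symmetrized form splits cleanly as a product of two difference factors in $\xi$ and $\eta$ separately is the crucial step; everything else is routine estimation. The second inequality is then essentially immediate from the first together with the definition of $\|\cdot\|_{\alpha}$.
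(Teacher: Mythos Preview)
Your argument is correct. The paper itself does not give a detailed proof of this lemma; it simply cites Lemma 2.1 of \cite{Morimoto2}, inequality (3.5) of \cite{Karch}, and Lemma 3.5.10 of \cite{Jacob}. The symmetrization-plus-Cauchy--Schwarz proof you outline for \eqref{char} is exactly the classical argument found in those references (in particular in Jacob's book on positive definite functions), and your derivation of \eqref{charalpha} from \eqref{char} via the splitting
\[
\varphi(\xi)-\varphi(\xi+\eta)=\varphi(\xi)\bigl(1-\varphi(\eta)\bigr)+\bigl(\varphi(\xi)\varphi(\eta)-\varphi(\xi+\eta)\bigr)
\]
is likewise the standard route and coincides with the proof in \cite{Morimoto2}. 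Your observation that the constant $2$ suffices in place of $4$ is correct; the paper simply quotes the weaker constant from \cite{Morimoto2}, and nothing downstream depends on the sharp value.
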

\begin{proof}
The proof of \eqref{char}, we can refer to $(18)$ in Lemma 2.1 of \cite{Morimoto2}, $(3.5)$ of \cite{Karch} and also Lemma 3.5.10 of \cite{Jacob}.
The proof of \eqref{charalpha} refer to $(19)$ in Lemma 2.1 of \cite{Morimoto2}.
\end{proof}

By a proof similar to the Lemma 2.2 in \cite{Morimoto2}, we obtain the following Lemma.
\begin{lemma}\label{ke}
Assume that $b(\,\cdot\,)$ is given in~$\eqref{b}$ with $s>0$, for $\varphi\in\mathcal{K}^{\alpha}$ and for any $\alpha>0$, we have
\begin{equation}\label{re}
\left|\int_{\mathbb{S}^2}b(\frac{\xi\cdot\sigma}{|\xi|})\left(\varphi(\xi^+)\varphi(\xi^-)-\varphi(\xi)\right)d\sigma\right|
\lesssim\alpha^{-\frac{2}{s}}\Gamma(\frac{2}{s})\|1-\varphi\|_{\alpha}|\xi|^{\alpha}.
\end{equation}
\end{lemma}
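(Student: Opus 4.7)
The strategy is to apply inequality \eqref{char} after observing that, by the definition of $\xi^{\pm}$, one has the exact decomposition $\xi = \xi^{+}+\xi^{-}$. Writing the integrand as
\begin{equation*}
\varphi(\xi^{+})\varphi(\xi^{-}) - \varphi(\xi) = \varphi(\xi^{+})\varphi(\xi^{-}) - \varphi(\xi^{+}+\xi^{-}),
\end{equation*}
inequality \eqref{char} yields
\begin{equation*}
|\varphi(\xi^{+})\varphi(\xi^{-}) - \varphi(\xi)| \leq \sqrt{(1-|\varphi(\xi^{+})|^{2})(1-|\varphi(\xi^{-})|^{2})}.
\end{equation*}

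Next, I would use the elementary bound $1-|\varphi(\eta)|^{2} = 2\,\mathrm{Re}(1-\varphi(\eta)) - |1-\varphi(\eta)|^{2} \leq 2|1-\varphi(\eta)|$ together with the assumption $\varphi\in\mathcal{K}^{\alpha}$, which gives $|1-\varphi(\eta)|\leq \|1-\varphi\|_{\alpha}|\eta|^{\alpha}$. A direct computation gives $|\xi^{+}|=|\xi|\cos(\theta/2)$ and $|\xi^{-}|=|\xi|\sin(\theta/2)$, where $\cos\theta = \xi\cdot\sigma/|\xi|$. Combining these facts,
\begin{equation*}
|\varphi(\xi^{+})\varphi(\xi^{-}) - \varphi(\xi)| \leq 2\|1-\varphi\|_{\alpha}|\xi|^{\alpha}\bigl(\cos(\theta/2)\sin(\theta/2)\bigr)^{\alpha/2}.
\end{equation*}

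Switching to spherical coordinates aligned with $\xi$, the surface integral reduces to $2\pi\int_{0}^{\pi/2}\cdots\sin\theta\,d\theta$ using the support condition $\cos\theta\geq 0$. Substituting the double-angle identity $\sin\theta = 2\sin(\theta/2)\cos(\theta/2)$ and discarding the harmless factor $\cos^{\alpha/2+1}(\theta/2)\leq 1$, the remaining task is to bound
\begin{equation*}
\int_{0}^{\pi/2} b(\cos\theta)\,\sin^{\alpha/2+1}(\theta/2)\,d\theta \lesssim \int_{0}^{\pi/2} b(\cos\theta)\,\sin^{\alpha/2}(\theta/2)\,\sin\theta\,d\theta,
\end{equation*}
where I have used that $\cos(\theta/2)$ is bounded below by a positive constant on $[0,\pi/2]$.

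The final step is to invoke exactly the computation carried out in \eqref{kernel} with $\alpha$ replaced by $\alpha/2$; this yields the bound $(\alpha/2)^{-2/s}\Gamma(2/s) = 2^{2/s}\alpha^{-2/s}\Gamma(2/s)$, which is $\lesssim \alpha^{-2/s}\Gamma(2/s)$ as $s>0$ is fixed. Assembling everything gives the desired estimate. The main technical point to be careful about is the bookkeeping of the $\alpha$-dependence: the Cauchy--Schwarz-type step in \eqref{char} costs a factor $1/2$ in the exponent, so the integrability in \eqref{kernel} must be applied at the level $\alpha/2$, not $\alpha$, but this only modifies the $s$-dependent multiplicative constant which is absorbed in $\lesssim$.
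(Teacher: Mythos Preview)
Your argument is correct, and it takes a genuinely different and more direct route than the paper's proof.

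The paper does \emph{not} use \eqref{char} here. Instead it introduces the projection $\zeta=(\xi^{+}\cdot\tfrac{\xi}{|\xi|})\tfrac{\xi}{|\xi|}$ and the reflected point $\tilde{\xi}^{+}=2\zeta-\xi^{+}$, and splits the integrand into three pieces
\[
\tfrac12\bigl(\varphi(\xi^{+})+\varphi(\tilde{\xi}^{+})-2\varphi(\zeta)\bigr),\qquad
\varphi(\zeta)-\varphi(\xi),\qquad
\varphi(\xi^{+})\bigl(\varphi(\xi^{-})-1\bigr).
\]
These are controlled respectively by a direct estimate against the underlying probability measure, by inequality \eqref{charalpha}, and by $|\varphi|\leq 1$. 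Each piece produces a factor $\sin^{\alpha}(\theta/2)$, so the paper can invoke \eqref{kernel} exactly at exponent~$\alpha$.

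Your approach, by contrast, applies \eqref{char} to the identity $\xi=\xi^{+}+\xi^{-}$ and then the square root halves the gain to $\sin^{\alpha/2}(\theta/2)$. As you correctly observe, this forces you to use \eqref{kernel} at exponent $\alpha/2$, costing only an extra factor $2^{2/s}$ that is absorbed into $\lesssim$. The trade-off: the paper's symmetrization gets the sharper angular exponent $\alpha$ (which feeds cleanly into the statement of Lemma~\ref{direct} immediately afterwards), while your argument is shorter and avoids both the symmetrization and the auxiliary inequality \eqref{charalpha}. For the purposes of Lemma~\ref{ke} itself, either exponent suffices.
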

\begin{proof}
Put $\zeta=(\xi^+\cdot\frac{\xi}{|\xi|})\frac{\xi}{|\xi|}$, then we set $\tilde{\xi}^+=2\zeta-\xi^+$, which is symmetric to $\xi^+$ with respect to $\xi$.
We can divide the integral on the left hand side into three parts,
\begin{align*}
&\int_{\mathbb{S}^2}b(\frac{\xi\cdot\sigma}{|\xi|})\left(\varphi(\xi^+)\varphi(\xi^-)-\varphi(\xi)\right)d\sigma\\
=&\frac{1}{2}\int_{\mathbb{S}^2}b(\frac{\xi\cdot\sigma}{|\xi|})\left(\varphi(\xi^+)+\varphi(\tilde{\xi}^+)
-2\varphi(\zeta)\right)d\sigma+\int_{\mathbb{S}^2}b(\frac{\xi\cdot\sigma}{|\xi|})\left(\varphi(\zeta)-\varphi(\xi)\right)d\sigma\\
&+\int_{\mathbb{S}^2}b(\frac{\xi\cdot\sigma}{|\xi|})\varphi(\xi^+)\left(\varphi(\xi^-)-1\right)d\sigma\\
=&I_1+I_2+I_3.
\end{align*}
Since in part $I_1$,
\begin{align*}
|\varphi(\xi^+)+\varphi(\tilde{\xi}^+)-2\varphi(\zeta)|
&=|\int_{\mathbb{R}^3}e^{-i\zeta\cdot v}(e^{-\eta^+\cdot v}+e^{-\eta^-\cdot v}-2)d\Psi(v)|\\
&\leq\int_{\mathbb{R}^3}(2-e^{-\eta^+\cdot v}+e^{-\eta^-\cdot v})d\Psi(v)\\
&\leq2\|1-\varphi\|_{\alpha}|\xi|^{\alpha}(\sin\theta/2)^{\alpha}.
\end{align*}
For $I_2$, by using the formula \eqref{charalpha}, we have
\begin{align*}
|\varphi(\zeta)-\varphi(\xi)|
&\leq\|1-\varphi\|_{\alpha}(4|\xi|^{\frac{\alpha}{2}}|\xi-\zeta|^{\frac{\alpha}{2}}+|\xi-\zeta|^{\alpha})\\
&\leq5\|1-\varphi\|_{\alpha}|\xi|^{\alpha}(\sin\theta/2)^{\alpha},
\end{align*}
and for $I_3$, using the elementary equality $|\varphi(\xi^+)|\leq\varphi(0)=1$ in Lemma 3.11 in \cite{Karch},
\begin{align*}
&|\varphi(\xi^+)(\varphi(\xi^-)-1)|\leq\|1-\varphi\|_{\alpha}|\xi|^{\alpha}(\sin\theta/2)^{\alpha}.
\end{align*}
Therefore, we have
\begin{align*}
&\left|\int_{\mathbb{S}^2}b(\frac{\xi\cdot\sigma}{|\xi|})\left(\varphi(\xi^+)\varphi(\xi^-)-\varphi(\xi)\right)d\sigma\right|\\
\leq& 16\pi\left(\int^{\frac{\pi}{2}}_0\sin^{\alpha}\frac{\theta}{2} b(\cos\theta)\sin\theta d\theta\right)\|1-\varphi\|_{\alpha}|\xi|^{\alpha}.
\end{align*}
The formula \eqref{re} follows from the above inequality and \eqref{kernel}.
\end{proof}

In fact, from the proof of Lemma \ref{ke}, we can lead to an intuitive understanding.
\begin{lemma}\label{direct}
Let $b(\,\cdot\,)$ be the function given in~$\eqref{b}$ with $s>0$. for $\varphi\in\mathcal{K}^{\alpha}$, $\alpha>0$ and for any $\epsilon>0$, set
$$\Omega_{\epsilon}=\Omega_{\epsilon}(\xi)=\left\{\sigma\in\mathbb{S}^2;1-\frac{\xi}{|\xi|}\cdot\sigma\leq 2\left(\frac{\epsilon}{\pi}\right)^2\right\}$$
and
$$R_{\epsilon,\varphi}(\xi)=\int_{\mathbb{S}^2\cap\Omega_{\epsilon}}b(\frac{\xi\cdot\sigma}{|\xi|})
\frac{\left(\varphi(\xi^+)\varphi(\xi^-)-\varphi(\xi)\right)}{|\xi|^{\alpha}}d\sigma,$$
then we obtain,
\begin{align*}
|R_{\epsilon,\varphi}(\xi)|\lesssim \alpha^{-\frac{2}{s}}\|1-\varphi\|_{\alpha}\left(\int^{+\infty}_{\log\big(\frac{1}{\epsilon}\big)}u^{\frac{2}{s}-1}e^{-u}du\right)\rightarrow0\,\,\text{as}\,\,\epsilon\rightarrow0^+.
\end{align*}
\end{lemma}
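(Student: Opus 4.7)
The plan is to mimic the proof of Lemma \ref{ke}, restricting every integral to the polar cap $\Omega_\epsilon$ and then tracking the truncated angular range through the chain of substitutions that produced \eqref{kernel}. First, I would introduce $\zeta = |\xi|\cos^2(\theta/2)\,\xi/|\xi|$, the orthogonal projection of $\xi^+$ onto the line $\mathbb{R}\xi$, together with its reflection $\tilde\xi^+ = 2\zeta-\xi^+$. Since $\Omega_\epsilon$ is rotationally symmetric about the axis $\xi/|\xi|$, the reflection swapping $\xi^+$ and $\tilde\xi^+$ preserves both $\Omega_\epsilon$ and the kernel $b(\xi\cdot\sigma/|\xi|)$, so the symmetrization used in Lemma \ref{ke} still works on the restricted domain and yields the decomposition
\begin{align*}
\int_{\mathbb{S}^2\cap\Omega_\epsilon}b\Big(\tfrac{\xi\cdot\sigma}{|\xi|}\Big)\bigl(\varphi(\xi^+)\varphi(\xi^-)-\varphi(\xi)\bigr)\,d\sigma = I_1^\epsilon + I_2^\epsilon + I_3^\epsilon,
\end{align*}
where $I_j^\epsilon$ are the localized analogues of $I_1, I_2, I_3$ from that lemma.

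Second, each of the three pointwise bounds established in the proof of Lemma \ref{ke} --- all of the form $\lesssim \|1-\varphi\|_\alpha|\xi|^\alpha\sin^\alpha(\theta/2)$ --- is pointwise in $\sigma$, so the restriction to $\Omega_\epsilon$ costs nothing. Summing the three localized estimates, dividing by $|\xi|^\alpha$, and integrating out the azimuthal angle yields
\begin{align*}
|R_{\epsilon,\varphi}(\xi)| \lesssim \|1-\varphi\|_\alpha \int_0^{\theta_\epsilon} \sin^\alpha(\theta/2)\,b(\cos\theta)\sin\theta\,d\theta,
\end{align*}
where $\theta_\epsilon$ is determined by $\sin(\theta_\epsilon/2)=\epsilon/\pi$.

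Third, I would apply the substitutions $u=\sin(\theta/2)$, $x=1/u$, $y=\log x$, $z=\alpha y$ exactly as in \eqref{kernel}. The upper cutoff $\sin(\theta/2)\le\epsilon/\pi$ transports successively into $x\ge\pi/\epsilon$, $y\ge\log(\pi/\epsilon)$, and finally $z\ge\alpha\log(\pi/\epsilon)$; this last quantity, like $\log(1/\epsilon)$, tends to $+\infty$ as $\epsilon\to 0^+$, so up to constants the bound reads
\begin{align*}
|R_{\epsilon,\varphi}(\xi)| \lesssim \alpha^{-\frac{2}{s}}\|1-\varphi\|_\alpha \int_{\log(1/\epsilon)}^{+\infty} z^{\frac{2}{s}-1}e^{-z}\,dz,
\end{align*}
a tail of the convergent integral defining $\Gamma(2/s)$, and hence vanishes as $\epsilon\to 0^+$. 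I do not expect any real obstacle beyond careful bookkeeping of the integration limits under the four successive substitutions; the entire argument is Lemma \ref{ke} re-run on a truncated angular domain, with the only genuine check being that the reflection symmetry used to build $I_1^\epsilon$ preserves $\Omega_\epsilon$, which is immediate from its axial symmetry.
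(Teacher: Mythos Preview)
Your proposal is correct and matches the paper's intended argument: the paper gives no explicit proof of this lemma, merely stating that it follows ``from the proof of Lemma \ref{ke},'' and your plan is precisely to re-run that proof on the truncated cap $\Omega_\epsilon$ and track the cutoff through the substitutions in \eqref{kernel}. Your check that the reflection $\sigma\mapsto\tilde\sigma$ preserves $\Omega_\epsilon$ (since $\tilde\sigma\cdot\hat\xi=\sigma\cdot\hat\xi$) is exactly the one point that needs verifying, and your bookkeeping of the integration limits is accurate up to the harmless discrepancy between $\alpha\log(\pi/\epsilon)$ and $\log(1/\epsilon)$, which the $\lesssim$ absorbs.
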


Now we are prepared to prove Theorem \ref{trick}.  The proof of this Theorem is mostly the same as in \cite{Morimoto2}.
\begin{proof}[\bf{The proof of the Uniqueness}]
For $\alpha>0$, let $\psi(t,\xi),\, \varphi(t,\xi)\in C([0,+\infty),\mathcal{K}^{\alpha})$ be two solutions to
the Cauchy problem \eqref{eq-2} with the initial datum $\psi_0, \varphi_0\in\mathcal{K}^{\alpha}$.
Set
$$h(t,\xi)=\frac{\psi(t,\xi)-\varphi(t,\xi)}{|\xi|^{\alpha}},$$
it follows that,
\begin{align*}
\partial_th(t,\xi)
=&\int_{\mathbb{S}^2\cap\Omega_{\epsilon}^c}
b(\frac{\xi}{|\xi|}\cdot\sigma)
\frac{\psi(t,\xi^+)\psi(t,\xi^-)-\varphi(t,\xi^+)\varphi(t,\xi^-)}{|\xi|^{\alpha}}d\sigma\\
&-\left(\int_{\mathbb{S}^2\cap\Omega_{\epsilon}^c}
b(\frac{\xi}{|\xi|}\cdot\sigma)d\sigma\right) h(t,\xi)\\
&+\int_{\mathbb{S}^2\cap\Omega_{\epsilon}}
b(\frac{\xi}{|\xi|}\cdot\sigma)\frac{\psi(t,\xi^+)\psi(t,\xi^-)-\psi(t,\xi)}{|\xi|^{\alpha}}d\sigma\\
&-\int_{\mathbb{S}^2\cap\Omega_{\epsilon}}
b(\frac{\xi}{|\xi|}\cdot\sigma)
\frac{\varphi(t,\xi^+)\varphi(t,\xi^-)-\varphi(t,\xi)}{|\xi|^{\alpha}}d\sigma\\
=&I_{\epsilon}(t,\xi)-a_{\epsilon}h(t,\xi)
+R_{\epsilon,\psi}(t,\xi)-R_{\epsilon,\varphi}(t,\xi)
\end{align*}
where
\begin{align*}
a_{\epsilon}&=\int_{\mathbb{S}^2\cap\Omega_{\epsilon}^c}
b(\frac{\xi}{|\xi|}\cdot\sigma)d\sigma=2\pi\int^{\frac{\pi}{2}}_{2\arcsin\frac{\epsilon}{\pi}}
b(\theta)\sin\theta d\theta\\
&\sim\int^{\frac{\pi}{2}}_{2\arcsin\frac{\epsilon}{\pi}}\theta^{-1}(\log \theta^{-1})^{\frac{2}{s}-1}d\theta,
\end{align*}
it diverges as $\epsilon\rightarrow0^+.$  Let $R>0$, for any $\xi$ with $|\xi|\leq R$,
\begin{align*}
&\left|\frac{\psi(t,\xi^+)\psi(t,\xi^-)
-\varphi(t,\xi^+)\varphi(t,\xi^-)}{|\xi|^{\alpha}}\right|\\
=&\left|\frac{\psi(t,\xi^+)(\psi(t,\xi^-)-\varphi(t,\xi^-))}{|\xi|^{\alpha}}
+\frac{\varphi(t,\xi^-)(\psi(t,\xi^+)-\varphi(t,\xi^+))}{|\xi|^{\alpha}}\right|\\
=&\left|\psi(t,\xi^+)h(t,\xi^-)\frac{|\xi^-|^{\alpha}}{|\xi|^{\alpha}}
+\varphi(t,\xi^-)h(t,\xi^+)\frac{|\xi^+|^{\alpha}}{|\xi|^{\alpha}}\right|\\
\end{align*}
Applying the elementary inequality
$|\psi(t,\xi^+)|\leq\psi(t,0)=1, |\varphi(t,\xi^-)|\leq\varphi(t,0)=1$ in Lemma 3.11 of \cite{Karch} again and the fact that for $0<\theta<\frac{\pi}{2}$, $|\xi^+|=|\xi|\cos\frac{\theta}{2}, |\xi^-|=|\xi|\sin\frac{\theta}{2}\leq |\xi|$,  we obtain,
\begin{align*}
\left|\frac{\psi(t,\xi^+)\psi(t,\xi^-)
-\varphi(t,\xi^+)\varphi(t,\xi^-)}{|\xi|^{\alpha}}\right|\leq
H_R(t)
\left(\cos^{\alpha}\frac{\theta}{2}+\sin^{\alpha}\frac{\theta}{2}\right),
\end{align*}
where $H_R(t)=\sup_{|\xi|\leq R}|h(t,\xi)|.$  In fact, we have
\begin{align*}
|I_{\epsilon}(t,\xi)|\leq \lambda_{\epsilon,\alpha}H_R(t)
\end{align*}
where
$$\lambda_{\epsilon,\alpha}=2\pi\int^{\frac{\pi}{2}}_{2\arcsin\frac{\epsilon}{\pi}}
b(\theta)\sin\theta\left(\cos^{\alpha}\frac{\theta}{2}+\sin^{\alpha}\frac{\theta}{2}\right)d\theta .$$
Notice that, as $\epsilon\rightarrow0,$
\begin{align*}
\lambda_{\epsilon,\alpha}-a_{\epsilon}\rightarrow 2\pi\lambda_{\alpha}=\int^{\frac{\pi}{2}}_{0}
b(\theta)\sin\theta\left(\cos^{\alpha}\frac{\theta}{2}+\sin^{\alpha}\frac{\theta}{2}-1\right)d\theta
\end{align*}
where $\lambda_{\alpha}$ was given in \eqref{lambda}.
Since $\psi(t,\xi),\, \varphi(t,\xi)\in C([0,+\infty),\mathcal{K}^{\alpha}),$ it follows from Lemma \ref{direct} that for any fixed $T>0$,
$$
\sup_{t\in(0,T]}
\left(|R_{\epsilon,\psi}(t,\xi)|
+|R_{\epsilon,\varphi}(t,\xi)|\right)=r_{\epsilon}\rightarrow0,\,\text{as}
\,\epsilon\rightarrow0^+.
$$
Therefore, we obtain that, for $|\xi|\leq R,$
$$|\partial_th(t,\xi)+a_{\epsilon}h(t,\xi)|\leq \lambda_{\epsilon,\alpha}H_R(t)+r_{\epsilon}.$$
Integrating from $0$ to $t$,
\begin{align*}
\left|e^{a_{\epsilon}t}h(t,\xi)-h(0,\xi)\right|
&=\left|\int^t_0\frac{\partial}{\partial_{\tau}}(e^{a_{\epsilon}\tau}h(\tau,\xi))d\tau\right|\\
&\leq\int^t_0\left|\frac{\partial}{\partial_{\tau}}\Big(e^{a_{\epsilon}\tau}h(\tau,\xi)\Big)\right|d\tau
=\int^t_0 e^{a_{\epsilon}\tau}\left|\partial_{\tau}h(\tau,\xi)+a_{\epsilon}h(\tau,\xi)\right|d\tau\\
&\leq\int^t_0 e^{a_{\epsilon}\tau}
(\lambda_{\epsilon,\alpha}H_R(\tau)+r_{\epsilon})
d\tau.
\end{align*}
Then it follows that,
\begin{align}\label{Gron}
e^{a_{\epsilon}t}H_R(t)&\leq H_R(0)
+\int^t_0 e^{a_{\epsilon}\tau}
(\lambda_{\epsilon,\alpha}H_R(\tau)+r_{\epsilon})
d\tau\nonumber\\
&=\lambda_{\epsilon,\alpha}\int^t_0 e^{a_{\epsilon}\tau}H_R(\tau)d\tau
+\frac{e^{a_{\epsilon}t}-1}{a_{\epsilon}}r_{\epsilon}+H_R(0).
\end{align}
Let $\eta(t)=\int^t_0 e^{a_{\epsilon}\tau}H_R(\tau)d\tau$, then
$$\eta'(t)\leq\lambda_{\epsilon,\alpha}\eta(t)+
\frac{e^{a_{\epsilon}t}-1}{a_{\epsilon}}r_{\epsilon}+H_R(0).$$
According to the Gronwall's inequality:
\begin{align*}
\eta(t)
&\leq e^{\lambda_{\epsilon,\alpha}t}
\int^t_0e^{-\lambda_{\epsilon,\alpha}\tau}\left[\frac{e^{a_{\epsilon}\tau}-1}{a_{\epsilon}}r_{\epsilon}+H_R(0)\right]d\tau\\
&=\frac{e^{a_{\epsilon}t}-e^{\lambda_{\epsilon,\alpha}t}}{(a_{\epsilon}-\lambda_{\epsilon,\alpha})a_{\epsilon}}r_{\epsilon}
+\frac{e^{\lambda_{\epsilon,\alpha}t}-1}{\lambda_{\epsilon,\alpha}a_{\epsilon}}r_{\epsilon}+\frac{e^{\lambda_{\epsilon,\alpha}t}-1}{\lambda_{\epsilon,\alpha}}H_R(0).
\end{align*}
Substituting into the inequality \eqref{Gron},  we have,
\begin{align*}
e^{a_{\epsilon}t}H_R(t)
&\leq\frac{e^{a_{\epsilon}t}-e^{\lambda_{\epsilon,\alpha}t}}
{a_{\epsilon}-\lambda_{\epsilon,\alpha}}r_{\epsilon}
+\frac{2e^{\lambda_{\epsilon,\alpha}t}-2}{a_{\epsilon}}r_{\epsilon}
+e^{\lambda_{\epsilon,\alpha}t}H_R(0).
\end{align*}
We conclude that,
\begin{align*}
H_R(t)
&\leq e^{(\lambda_{\epsilon,\alpha}-a_{\epsilon})t}H_R(0)
+\left[\frac{(e^{(\lambda_{\epsilon,\alpha}-a_{\epsilon})t}-1)}{\lambda_{\epsilon,\alpha}-a_{\epsilon}}
+\frac{2e^{-a_{\epsilon}t}(e^{\lambda_{\epsilon,\alpha}t}-1)}{a_{\epsilon}}\right]r_{\epsilon}\\
&\leq\,e^{(\lambda_{\epsilon,\alpha}-a_{\epsilon})t}H_R(0)
+\left[\frac{(e^{(\lambda_{\epsilon,\alpha}-a_{\epsilon})t}-1)}{\lambda_{\epsilon,\alpha}-a_{\epsilon}}
+\frac{2e^{(\lambda_{\epsilon,\alpha}-a_{\epsilon})t}}{a_{\epsilon}}\right]r_{\epsilon}.
\end{align*}
Because $\lambda_{\epsilon,\alpha}-a_{\epsilon}\rightarrow\lambda_{\alpha},\,a_{\epsilon}\rightarrow+\infty$
and $r_{\epsilon}\rightarrow0$ as
$\epsilon\rightarrow0^+$, taking the limit $\epsilon\rightarrow0^+$, we can deduce that
$$H_R(t)\leq e^{\lambda_{\alpha}t}H_R(0).$$
Taking the limit $R\rightarrow+\infty$, we have
$$\|\psi(t)-\varphi(t)\|_{\alpha}=\sup_{\xi\in\mathbb{R}^3}\frac{|\psi(t,\xi)-\varphi(t,\xi)|}{|\xi|^{\alpha}}
\leq e^{\lambda_{\alpha}t}\|\psi_0-\varphi_0\|_{\alpha},$$
Therefore, we obtain \eqref{stable}.  This ends the proof of the uniqueness.
\end{proof}

\begin{proof}[\bf{The proof of the existence}]
Firstly, constructing a sequence solutions under the cutoff assumption
$$b_n(\cos\theta)=\min(b(\cos\theta),n)\leq b(\cos\theta), \text{for}\,n\in\mathbb{N},$$
then by the result in Section 4 of \cite{Karch}, there exists a solution $\psi_n(t,\xi)\in C([0,+\infty),\mathcal{K}^{\alpha})$ to the Cauchy problem \eqref{eq-2}. By a proof similar to that in \cite{Morimoto2}, we have
$\{\psi_n(t,\xi)\}_{n\in\mathbb{N}}$ is equi-continuity and uniformly bounded in $[0,T]\times\mathbb{R}^3$.
By the Ascoli-Arzel\`a theorem, there exists a convergent sequence  $\{\psi_{n_k}(t,\xi)\}_{k\in\mathbb{N}}$, such that
$\lim_{k\rightarrow +\infty}\psi_{n_k}(t,\xi)=\psi(t,\xi)$ is the solution of \eqref{eq-2}.
This ends the proof of the existence.
\end{proof}

\section{The proof of the Proposition \ref{trick1} }\label{S3}
In this section, we prove the regularity of the solution to Cauchy problem \eqref{eq-2}.  Notice that, to prove the the regularity, we only assume $0<s<2$.  For $s\geq2$, we can't get any smoothing effect of the solution to the Cauchy problem \eqref{eq-2}.

Before the proof of the regularity of the solution to Cauchy problem \eqref{eq-2}, we present the coercive estimate for the kernel.  The proof is similar in spirit to Lemma 4 of \cite{ADVW}.
\begin{lemma}\label{regular}
The collision kernel $b(\,\cdot\,)$ satisfies the assumption $\eqref{b}$ with $0<s<2$, namely,
$$b(\cos\theta)\sin\theta\sim \theta^{-1}\left(\log\Big(\frac{\theta}{2}\Big)^{-1}\right)^{\frac{2}{s}-1}$$
then for $\xi\in\mathbb{R}^3$,
$$\int_{\mathbb{S}^2}
b(\frac{\xi}{|\xi|}\cdot\sigma)\min(1,|\xi^-|^2)d\sigma\gtrsim(\log\langle|\xi|\rangle)^{\frac{2}{s}}.$$
\end{lemma}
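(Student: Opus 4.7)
The plan is to reduce the spherical integral to a one-dimensional integral over the polar angle $\theta$ (with $\cos\theta = \xi\cdot\sigma/|\xi|$), then split into a large-$|\xi|$ regime (where the log-growth comes out explicitly via a change of variables) and a bounded-$|\xi|$ regime (where the condition $s<2$ is what makes the estimate hold).

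First I would use the identity $|\xi^-| = |\xi|\sin(\theta/2)$ and spherical coordinates with polar axis along $\xi/|\xi|$. Since the integrand depends only on $\theta$ and since $b$ is supported on $\cos\theta \geq 0$, this gives
\begin{equation*}
\int_{\mathbb{S}^2} b\!\left(\tfrac{\xi\cdot\sigma}{|\xi|}\right)\min(1,|\xi^-|^2)\,d\sigma
= 2\pi\int_0^{\pi/2} b(\cos\theta)\sin\theta\,\min\!\bigl(1,|\xi|^2\sin^2(\theta/2)\bigr)\,d\theta.
\end{equation*}
By assumption \eqref{b}, the weight $b(\cos\theta)\sin\theta$ is comparable to $\theta^{-1}(\log(2/\theta))^{2/s-1}$ as $\theta\to 0^+$, which drives all of what follows.

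For $|\xi|\geq 2$ (the large-$|\xi|$ regime), set $\theta_0 = 2\arcsin(1/|\xi|)$, so that $|\xi|\sin(\theta/2)\geq 1$ for $\theta\in[\theta_0,\pi/2]$. Restricting the integral to this interval, where $\min=1$, and applying the substitution $u=\log(2/\theta)$ yields
\begin{equation*}
c\int_{\theta_0}^{\pi/2}\theta^{-1}\bigl(\log(2/\theta)\bigr)^{2/s-1}\,d\theta
= c\int_{\log(4/\pi)}^{\log(2/\theta_0)}u^{2/s-1}\,du
= \tfrac{cs}{2}\Bigl[\bigl(\log(2/\theta_0)\bigr)^{2/s} - (\log(4/\pi))^{2/s}\Bigr].
\end{equation*}
Using the elementary bound $\arcsin(1/|\xi|)\leq 2/|\xi|$ (valid for $|\xi|\geq 2$), one gets $\log(2/\theta_0)\gtrsim \log|\xi|$, and since $\log\langle|\xi|\rangle \sim \log|\xi|$ for large $|\xi|$, this is bounded below by a constant times $(\log\langle|\xi|\rangle)^{2/s}$.

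For the bounded-$|\xi|$ regime, say $|\xi|\leq 1$, we have $|\xi|^2\sin^2(\theta/2)\leq 1$ throughout $[0,\pi/2]$, so $\min=|\xi|^2\sin^2(\theta/2)$, and the integral is bounded below by $c|\xi|^2\int_0^{\pi/2}\theta(\log(2/\theta))^{2/s-1}\,d\theta$, which is a positive multiple of $|\xi|^2$. On the target side, $(\log\langle|\xi|\rangle)^{2/s}\leq C|\xi|^{4/s}$; here the assumption $s<2$ (i.e.\ $4/s>2$) is crucial, as it gives $|\xi|^{4/s}\leq |\xi|^2$ on $|\xi|\leq 1$, closing the estimate. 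The intermediate range $1\leq|\xi|\leq 2$ is handled by noting both sides are comparable positive constants (continuity and strict positivity).

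The only mild obstacle is bookkeeping across these two regimes and making the role of $s<2$ explicit: if $s\geq 2$, the small-$|\xi|$ algebraic bound $\gtrsim|\xi|^2$ would be strictly weaker than the target $\sim|\xi|^{4/s}$ near the origin, and the inequality would fail — this is precisely the reason the regularity statement of Proposition \ref{trick1} is restricted to $0<s<2$.
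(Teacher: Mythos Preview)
Your proposal is correct and follows essentially the same route as the paper's proof: both reduce to a polar-angle integral, split near $|\xi|\approx 2$, restrict for large $|\xi|$ to the subinterval where $\min(1,|\xi^-|^2)=1$ and extract the $(\log|\xi|)^{2/s}$ growth via the substitution $u=\log(c/\theta)$, while for small $|\xi|$ both use the lower bound $\gtrsim|\xi|^2$ and compare against $(\log\langle|\xi|\rangle)^{2/s}\lesssim|\xi|^{4/s}$. The only cosmetic differences are your explicit discussion of the role of $s<2$ and your separate treatment of the intermediate range $1\le|\xi|\le 2$, which the paper simply absorbs into the case $|\xi|\le 2$.
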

\begin{proof}
Since $b(\,\cdot\,)$ satisfies the assumption $\eqref{b}$, for $|\xi|>2$
\begin{align*}
&\int_{\mathbb{S}^2}
b(\frac{\xi}{|\xi|}\cdot\sigma)\min(1,|\xi^-|^2)d\sigma\\
&\gtrsim\int^{\frac{\pi}{4}}_0\theta^{-1}(\log\theta^{-1})^{\frac{2}{s}-1}\min(1,|\xi|^2\theta^2)d\theta\\
&\gtrsim\int^{\frac{\pi}{4}}_{\frac{1}{|\xi|}}\theta^{-1}(\log\theta^{-1})^{\frac{2}{s}-1}d\theta
=\int^{|\xi|}_{\frac{4}{\pi}}(\log u)^{\frac{2}{s}-1}\frac{du}{u}\\
&=\int^{\log(|\xi|)}_{\log(\frac{4}{\pi})} u^{\frac{2}{s}-1}du=\frac{s}{2}[(\log|\xi|)^{\frac{2}{s}}-(\log(\frac{4}{\pi}))^{\frac{2}{s}}]\gtrsim (\log\langle|\xi|\rangle)^{\frac{2}{s}}.
\end{align*}
On the other hand, for $|\xi|\leq2$,
\begin{align*}
&\int^{\frac{\pi}{4}}_0\theta^{-1}(\log\theta^{-1})^{\frac{2}{s}-1}\min(1,|\xi|^2\theta^2)d\theta\\
&\gtrsim \left(\int^{\frac{\pi}{4}}_0\theta(\log\theta^{-1})^{\frac{2}{s}-1}d\theta\right)|\xi|^2\gtrsim(\log\langle|\xi|\rangle)^{\frac{2}{s}}.
\end{align*}
we conclude that, for $\xi\in\mathbb{R}^3$,
\begin{equation*}
\int_{\mathbb{S}^2}
b(\frac{\xi}{|\xi|}\cdot\sigma)\min(1,|\xi^-|^2)d\sigma\gtrsim(\log\langle|\xi|\rangle)^{\frac{2}{s}}.
\end{equation*}
This ends the proof of Lemma \ref{regular}.
\end{proof}

Now we are prepared to proof the Proposition \ref{trick1}.
\begin{proof}[\bf{The proof of the Proposition \ref{trick1}}]
As in  \cite{YSCT}, \cite{Morimoto2} and the references therein, set the time dependent weight function
$$M_{\delta}(t,\xi)=\langle\xi\rangle^{Nt-4}
\langle\delta\xi\rangle^{-2N_0}, \,\text{with}\,\,\langle\xi\rangle^2=1+|\xi|^2$$
where $N_0=\frac{NT}{2}+2$, $N\in\mathbb{N}$ and $\delta>0$ is a small positive constant. Multiplying $M_{\delta}(t,\xi)^2\overline{\psi(t,\psi)}$ by the equation \eqref{eq-2} and integrate over $\mathbb{R}^3$,
We define
$$\psi^{\pm}=\psi(t,\xi^{\pm});\,\,M^+=M_{\delta}(t,\xi^+),$$
then
\begin{equation}\label{equality}
2\int_{\mathbb{R}^3}Re\left(\partial_t\psi M^2\overline{\psi}\right)d\xi
-2\int_{\mathbb{R}^3\times \mathbb{S}^2}b(\frac{\xi}{|\xi|}\cdot\sigma)Re\{(\psi^+\psi^--\psi)M^2\overline{\psi}\}d\sigma d\xi=0.
\end{equation}
Consider the second term
\begin{align*}
-2Re\{(\psi^+\psi^--\psi)M^2\overline{\psi}\}
=&\left(|M\psi|^2+|M^+\psi^+|^2-
2Re\{\psi^-M^+\psi^+\overline{M\psi}\}\right)\\
&+\left(|M\psi|^2-|M^+\psi^+|^2\right)
+2Re\{\psi^-(M-M^+)\psi^+\overline{M\psi}\}\\
=&J_1+J_2+J_3
\end{align*}
For the term $J_1$, by using the Cauchy-Schwarz inequality $$\left|2Re\{\psi^-M^+\psi^+\overline{M\psi}\}\right|\geq -|\psi^-|(|M\psi|^2+|M^+\psi^+|^2),$$
we obtain from the definition of \eqref{regular-kernel} that
$$J_1\geq D_T\min(1,|\xi^-|^2)|M\psi|^2.$$
Then for $b(\,\cdot\,)$ satisfies the assumption $\eqref{b}$,
\begin{align*}
&\int_{\mathbb{R}^3\times\mathbb{S}^2}
b(\frac{\xi}{|\xi|}\cdot\sigma)J_1d\sigma d\xi\\
\gtrsim&\int_{\mathbb{R}^3}\left(\int_{\mathbb{S}^2}
b(\frac{\xi}{|\xi|}\cdot\sigma)\min(1,|\xi^-|^2)d\sigma \right) |M\psi|^2d\xi\\
\gtrsim&\int_{\mathbb{R}^3}\left(\int^{\frac{\pi}{4}}_0\theta^{-1}(\log\theta^{-1})^{\frac{2}{s}-1}\min(1,|\xi|^2\theta^2)d\theta\right) |M\psi|^2d\xi.
\end{align*}
We deduce from Lemma \ref{regular} that,
\begin{equation*}
\int_{\mathbb{R}^3\times\mathbb{S}^2}
b(\frac{\xi}{|\xi|}\cdot\sigma)J_1d\sigma d\xi\gtrsim \int_{\mathbb{R}^3}(\log\langle|\xi|\rangle)^{\frac{2}{s}}|M\psi|^2d\xi.
\end{equation*}
Using the change of variable $\xi\rightarrow\xi^+$ for the term $M^+\psi^+$ in $J_2$, in spirt of the cancellation lemma $($see Lemma 1 of \cite{ADVW}$)$, we have
\begin{align*}
&\left|\int_{\mathbb{R}^3\times\mathbb{S}^2}
b(\frac{\xi}{|\xi|}\cdot\sigma)J_2d\sigma d\xi\right|\\
&\lesssim\int^{\frac{\pi}{4}}_0\theta(\log\theta^{-1})^{\frac{2}{s}-1}d\theta\int_{\mathbb{R}^3}|M\psi|^2d\xi\\
&\lesssim \int_{\mathbb{R}^3}|M\psi|^2d\xi.
\end{align*}
For the last term $J_3$, we observe that $|M-M^+|\lesssim\sin^2\theta M^+$, cf. $(3.4)$ in \cite{YSCT},
then
\begin{align*}
|\int_{\mathbb{R}^3\times\mathbb{S}^2}
b(\frac{\xi}{|\xi|}\cdot\sigma)J_3d\sigma d\xi|\lesssim \int_{\mathbb{R}^3}|M\psi|^2d\xi.
\end{align*}
Finally, substituting these estimations of $J_1, J_2, J_3$ back to \eqref{equality}, we have for a constant $c_0>0,$
such that,
\begin{align*}
\frac{d}{dt}\int_{\mathbb{R}^3}|M\psi(t,\xi)|^2d\xi
+\int_{\mathbb{R}^3}\left(c_0(\log\langle|\xi|\rangle)^{\frac{2}{s}}-2N\log\langle|\xi|\rangle\right)|M\psi|^2d\xi
\lesssim \int_{\mathbb{R}^3}|M\psi|^2d\xi.
\end{align*}
Since for $0<s<2$,
$$(\log\langle|\xi|\rangle)^{\frac{2}{s}-1}\rightarrow +\infty\,\,\text{as}\,\,|\xi|\rightarrow+\infty,$$
we have
\begin{align*}
\frac{d}{dt}\int_{\mathbb{R}^3}|M\psi(t,\xi)|^2d\xi
\lesssim \int_{\mathbb{R}^3}|M\psi|^2d\xi.
\end{align*}
It follows from the Gronwall inequality that, for any $t\in[0,T]$,
$$\int_{\mathbb{R}^3}|\langle\xi\rangle^{Nt-4}(1+\delta^2|\xi|^2)^{-N_0}\psi(t,\xi)|^2d\xi
\lesssim\int_{\mathbb{R}^3}|\langle\xi\rangle^{-4}\psi_0|^2d\xi\lesssim\|\psi_0\|_{\alpha}.$$
Let $\delta\rightarrow0$ and N be an arbitrarily large, we ends the proof of the regularity.
\end{proof}

We prove the $H^{+\infty}$ smoothing effect of the solution in Example \ref{example},
remark that $f_0$ in our Example satisfying some basic equality of
$$\int_{\mathbb{R}^3}f_0(v)dv=1, \,\int_{\mathbb{R}^3}v_jf_0(v)dv=0,\,j=1,2,3.$$

\begin{proof}[\bf{The proof of the Example \ref{example} }]
Let $\psi_0$ and $\psi(t)$ be the Fourier transforms of $f_0$ and $f(t)$.  Indeed, by using the Fourier transform, we have
$$\psi_0=\frac{1}{2}e^{-\frac{|\xi|^2}{2}}+\frac{1}{12}\sum^3_{k=1}\left(e^{i\mathbf{e}_k\cdot\xi}+e^{-i\mathbf{e}_k\cdot\xi}\right)
=\frac{1}{2}e^{-\frac{|\xi|^2}{2}}+\frac{1}{6}\sum^3_{k=1}\cos\mathbf{e}_k\cdot\xi.$$
Set $\xi=(\xi_1,\xi_2,\xi_3)$, then
$$|1-\psi_0|=\frac{1}{2}(1-e^{-\frac{|\xi|^2}{2}})+\frac{1}{6}\sum^3_{k=1}(1-\cos\xi_k)=\frac{1}{2}(1-e^{-\frac{|\xi|^2}{2}})+\frac{1}{3}\sum^3_{k=1}\left(\sin\frac{\xi_k}{2}\right)^2
\leq\frac{1}{3}|\xi|^2.$$
This shows that
\begin{equation}\label{exist}
\psi_0\in \mathcal{K}^2,\,\text{and\,then}\,\psi(t)\in C([0, +\infty);\mathcal{K}^2).
\end{equation}
Now we want to prove the key coercive estimate \eqref{regular-kernel}.
Indeed, for $|\xi|\leq1$, then
$|\xi_k|\leq|\xi|\leq1$ for $k=1,2,3$, therefore, we have
$$\cos1\leq\cos\xi_k\leq1.$$
It follows that
\begin{align}\label{1}
1-|\psi_0|&\geq\frac{1}{2}(1-e^{-\frac{|\xi|^2}{2}})+\frac{1}{6}\sum^3_{k=1}(1-|\cos\xi_k|)\nonumber\\
&=\frac{1}{2}(1-e^{-\frac{|\xi|^2}{2}})+\frac{1}{3}\sum^3_{k=1}\left(\sin\frac{\xi_k}{2}\right)^2\nonumber\\
&\geq\frac{1}{3}\frac{4}{\pi^2}\sum^3_{k=1}\frac{\xi^2_k}{4}=\frac{1}{3\pi^2}|\xi|^2.
\end{align}
Besides, since $\psi_0,\psi(t)\in \mathcal{K}^2$ in \eqref{exist}, we can deduce from \eqref{re} and \eqref{stable} that,
\begin{align}\label{2}
|\psi(t)-\psi_0|
&\leq\int^t_0
\left|\int_{\mathbb{S}^2}b(\frac{\xi\cdot\sigma}{|\xi|})
\left(\varphi(\tau,\xi^+)\varphi(\tau,\xi^-)-\varphi(\tau,\xi)\right)d\sigma\right|d\tau\nonumber\\
&\lesssim\left(\int^t_0\|1-\varphi(\tau)\|_2d\tau\right)|\xi|^2\lesssim t|\xi|^2.
\end{align}
Therefore, for $|\xi|\leq1$ and $T>0$, there exist a positive constant $C$ dependent on $T$ such that, for $0<t<T$,
$$1-|\psi(t)|\geq 1-|\psi_0|-|\psi(t)-\psi_0|\geq \frac{1}{3\pi^2}|\xi|^2-Ct|\xi|^2,$$
Choosing the constant $T_1>0$ small enough, then for $0<t<T_1$ and $|\xi|\leq1$, we have
$$1-|\psi(t)|\gtrsim |\xi|^2.$$
On the other hand, for $|\xi|>1$,  by a proof similar to that in \eqref{1}, one can verify that
\begin{align}\label{3}
1-|\psi_0|&\geq\frac{1}{2}(1-e^{-\frac{|\xi|^2}{2}})+\frac{1}{6}\sum^3_{k=1}(1-|\cos\xi_k|)\nonumber\\
&\geq \frac{1}{2}(1-e^{-\frac{|\xi|^2}{2}})\geq \frac{1}{2}(1-e^{-\frac{1}{2}})
\end{align}
It follows from \eqref{2} that, for $|\xi|>1$, we have
$$\lim_{t\rightarrow0}|\psi(t)-\psi_0|=0.$$
Since $|\psi(t)|\leq |\psi_0|+|\psi(t)-\psi_0|$, then
$$1-\lim_{t\rightarrow0}|\psi(t)|\geq1-|\psi_0|-\lim_{t\rightarrow0}|\psi(t)-\psi_0|\geq \frac{1}{2}(1-e^{-\frac{1}{2}}).$$
We choose a $T_2$ small, such that $1-|\psi(t)|\geq C_{T_2}>0.$  In conclusion, set $T=\min(T_1, T_2)$, then for any $0<t<T$, the key estimate \eqref{regular-kernel} holds true.   By using the Proposition \ref{trick1} with the key estimate \eqref{regular-kernel}, we end the proof of the Example \ref{example}.

\end{proof}
\bigskip
\noindent {\bf Acknowledgements.}
The author would like to express his sincere thanks to
Prof. Chao-Jiang Xu for stimulating discussions and suggestions.
This research is supported by the fundamental research funds for South-Central University for Nationalities(No.CZQ16014).

\end{document}